\newtheorem{theorem}{Theorem}
\newtheorem{lemma}{Lemma}
\begin{document}
\begin{center}
  \large 
  Inverse extremal problem for an anti-tumor \\therapy model \par \bigskip
  \normalsize
Andrey Kovtanyuk\textsuperscript{1,$\star$}, Christina Kuttler\textsuperscript{1}, Kristina Koshel\textsuperscript{2}, \\ and Alexander Chebotarev\textsuperscript{2} 
\par \bigskip

\textsuperscript{1}\,Technical University of Munich, Mathematical Department, Boltzmannstr. 3, 85748, Garching, Germany \par
\textsuperscript{2}\,Far Eastern Federal University, AjaxBay 10, Russky Island, \\ 690922, Vladivostok, Russia \par \bigskip

email: \textsuperscript{$\star$}\,kovtanyu@ma.tum.de

\end{center}

\begin{abstract}
An optimal control problem for a model of tumor growth is studied. In a given subdomain, it is required to minimize the density of tumor cells, while the drug concentration in tissue is limited by given minimal and maximal values. Based on derived estimates of the solution of the controlled system, the solvability of the control problem is proved. The problem is reduced to an optimal control problem with a penalty. An algorithm for solving the optimal control problem with a penalty is constructed and implemented. The efficiency of the algorithm is illustrated by a numerical example.
\end{abstract}


\section{Introduction}

Currently, there are many mathematical models of tumor evolution based on ODEs or PDEs  \cite{Schonfeld, Wagner, Fritz, Kuznetsov}. Often such models are quite complex and include several state variables. For example, the article \cite{Wagner} considers a model that contains five state variables, two of which are volume fractions of proliferative and necrotic tumor cells. A 3D--1D model studied in \cite{Fritz} includes seven state variables of which three relate to volume fractions of different species of tumor cells (proliferative, hypoxic, and necrotic cells). A model considered in \cite{Kuznetsov} contains eleven state variables including three species of tumor cells. Note that the models from \cite{Wagner, Fritz, Kuznetsov} also take into account the influence of anti-tumor drug therapy. 

Some studies of tumor evolution are based on reaction-diffusion models in which tumor cells are not divided into species (see, e.g., \cite{Harpold, Alfonso, Colli, Viguerie}, where tumor cell growth simulation results were presented). Many diffusion models on tumor cell growth and death contain nonlinear reaction terms. Note that the equation for glioma cell growth in \cite{Alfonso}, in addition to the nonlinear reaction term, also contains a nonlinear diffusion term. 

Optimal control of the volume fraction of tumor cells under the influence of drug therapy is important for assessing the effectiveness of the treatment plan and its correction. There are a series of works on the optimal control of tumor treatment that do not take into account its spatial distribution (see, e.g., \cite{Schattler, Badri}). Optimal control problems for diffusion models of tumor treatment are studied in \cite{Colli, Sowndarrajan}.

In the current paper, a model of tumor growth under the influence of drug therapy is under consideration. The model describes the behavior of two state variables: the normalized density of tumor cells and normalized drug concentration. An optimal control problem for the considered model is studied. Following this, in a given subdomain it is necessary to minimize the density of tumor cells, while the drug concentration is limited to the specified minimum and maximum values. The solvability of the optimal control problem is proved. An algorithm for solving the optimal control problem by minimizing the objective functional with a penalty is proposed. The efficiency of the algorithm is illustrated by a numerical example.

\section{Formalization of the optimal control problem}

We consider the following initial-boundary value problem describing the growth and death of tumor cells in the domain $\Omega$ with boundary $\Gamma$ during time interval $(0,T)$:
\begin{equation}
\label{eq21}
y' - \nabla \cdot (k \nabla y) = d(s)y,\;\;\; y|_\Gamma = 0, \;\;\; y(x,0) = y_0;
\end{equation}
\begin{equation}
\label{eq1}
s'(t) + M_0 s(t) = u(t),\;\;\; s(0) = 0,
\end{equation}
where $y$ is the normalized density of tumor cells, $s$ is the normalized drug concentration in tissue, $u$ describes the drug supply, $k = k(x)$ is the diffusion coefficient, $M_0$ is positive constants.

Let $\Omega$ be a Lipschitz bounded domain, $Q=\Omega\times (0,T)$, $\Sigma=\Gamma\times (0,T)$.
We denote by $L^p$, $1 \leq p \leq \infty$, the Lebesgue space, by $H^1$ the Sobolev space $W^1_2$, and by $L^p(0,T;X)$ the Lebesgue space of functions from $L^p$, defined on $(0, T)$, with values in Banach space $X$.
Also let $H = L^2(\Omega), \; V=H^1_0(\Omega)=\{v\in H^1(\Omega):\; v|_{\Gamma}=0\}$, and the space $V'$ be the dual to $V$. Then we identify $H$ with its dual space $H'$ such that  $V \subset H^1(\Omega)\subset H = H' \subset (H^1(\Omega))'\subset V'$, and denote by $\|\cdot\|$ the norm in $H$, and by $(h,v)$ the value of functional $h\in V'$ on the element $v\in V$ coinciding with the inner product in $H$ if $h \in H$.
We also define the control space $U=L^2(0,T)$ and the state space 
$$Y = \{y \in L^2(0,T;V) \colon y'= dy/dt\in L ^2(0,T,V') \}.$$

We assume that the input data satisfies the following conditions:
\vskip 0.2cm
\noindent $(i) \;\;\; 0 < k_0 \leq k(x) \leq k_1,\;\; k_j=Const$,\;\; $y_0\in H$;
\vskip 0.2cm
\noindent $(ii) \;\; d(s)(s-s_m)<0,\;\; s\neq s_m$, 
\vskip 0.1cm
\;\;\, $|d(s_1)-d(s_2)|\leq L_a|s_1-s_2|\;\; \forall \;|s_{1,2}|\leq a.$
\vskip 0.2cm

Let us define the operator $A\colon V \to V'$ using the equality: 
$$\forall\; y,v\in V\quad (Ay, v) = (k\nabla y, \nabla v). $$
Therefore, for $y \in Y$, the problem (\ref{eq21}) can be rewritten in the following weak form:
\begin{equation}
\label{eq2}
y' + Ay = d(s)y,\;\;\; y(0) = y_0.
\end{equation}
Here and further, when writing differential equations, we assume that they hold over the entire time interval $(0,T)$.

{\bf Problem ($P$)} For $\lambda > 0$, it is required to minimize the objective functional 
\begin{equation}
\label{eq0}
J(y,u)= \frac{1}{2}\|y\|^2_{L^2(Q)} + \frac{\lambda}{2}\|u\|^2_{L^ 2(0,T)}\rightarrow \inf 
\end{equation}
on functions $y\in Y$, $u\in U$ satisfying the conditions (\ref{eq1}) and (\ref{eq2}) such that 
\[
s(t)\leq s_+,\;\; t\in (0,T); \;\; s(t)\geq s_-,\;\; t\in (t_0,T), 
\]
where $s_-$, $t_0$ are positive constants.

\section{Solvability of the optimal control problem}

First, consider the properties of problems \eqref{eq1} and \eqref{eq2}. Define the operator $B: L^2(0,T)\to H^1(0,T)$ so that $s=B(u)$ if $s$ is a solution of problem \eqref{eq1}.
\begin{lemma}
For $s=B(u)$ the following estimates hold:
\begin{equation}
\label{B1}
|s(t)| \leq \sqrt{t}\|u\|_U,\;\;\;  \|s\|_U\leq T\|u\|_U,\;\;\; \|s'\|_U\leq (1+M_0T)\|u\|_U.
\end{equation}
\end{lemma}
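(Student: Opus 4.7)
The plan is to proceed from the explicit integral representation of the solution to \eqref{eq1}, since $M_0$ is constant and the initial datum vanishes. By the variation of constants formula,
\[
s(t) \;=\; \int_0^t e^{-M_0(t-\tau)}\, u(\tau)\, d\tau,
\]
and because $M_0 > 0$ the exponential factor is bounded by $1$ on $[0,t]$.

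First I would prove the pointwise estimate. Taking absolute values inside the integral, applying $e^{-M_0(t-\tau)} \leq 1$, and then the Cauchy--Schwarz inequality on $[0,t]$ gives
\[
|s(t)| \;\leq\; \int_0^t |u(\tau)|\, d\tau \;\leq\; \sqrt{t}\,\|u\|_{L^2(0,t)} \;\leq\; \sqrt{t}\,\|u\|_U,
\]
which is the first inequality in \eqref{B1}.

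Next, for the $L^2$ bound I would simply integrate the square of the pointwise bound over $(0,T)$:
\[
\|s\|_U^2 \;=\; \int_0^T |s(t)|^2\, dt \;\leq\; \int_0^T t\,\|u\|_U^2\, dt \;=\; \tfrac{T^2}{2}\,\|u\|_U^2 \;\leq\; T^2\,\|u\|_U^2,
\]
giving $\|s\|_U \leq T\|u\|_U$. (A sharper constant $T/\sqrt{2}$ is actually available, but $T$ suffices for the lemma.)

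Finally, for the derivative bound I would use the ODE itself: since $s' = u - M_0 s$, the triangle inequality in $U = L^2(0,T)$ combined with the previous estimate yields
\[
\|s'\|_U \;\leq\; \|u\|_U + M_0\|s\|_U \;\leq\; \|u\|_U + M_0 T\|u\|_U \;=\; (1+M_0 T)\|u\|_U.
\]
There is no real obstacle here; the argument is a routine application of Duhamel's formula plus Cauchy--Schwarz, and the three estimates are obtained in cascade, each relying on the previous one.
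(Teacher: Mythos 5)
Your proof is correct and follows exactly the route the paper takes: the explicit representation $s(t)=\int_0^t e^{-M_0(t-\tau)}u(\tau)\,d\tau$ with $e^{-M_0(t-\tau)}\le 1$ and Cauchy--Schwarz for the first two estimates, and the equation $s'=u-M_0s$ together with the bound on $\|s\|_U$ for the third. The paper merely states these steps without detail, so your write-up is simply a fuller version of the same argument.
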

\begin{proof}
The first two estimates \eqref{B1} immediately follow from the representation of the solution of problem \eqref{eq1}, 
$$s(t)=\int_0^te^{-M_0(t-\tau)}u(\tau)d\tau.$$ 
The inequality for $\|s'\|_U$ follows from the equation \eqref{eq1} and the estimate for $\|s\|_U$.
\end{proof}

\begin{lemma}
Let conditions (i),\,(ii) hold. Then for $u\in U$ there is a unique solution $y\in Y$ of problem \eqref{eq2}, where $s=B(u)$.
\end{lemma}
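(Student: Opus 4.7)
The plan is to convert problem \eqref{eq2} into a genuinely linear parabolic equation by exploiting the regularity of $s$, and then apply standard linear parabolic theory.

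First I would show that the zero-order coefficient $q(t) := d(s(t))$ belongs to $L^\infty(0,T)$. By Lemma~1 we have $s = B(u) \in H^1(0,T)$, and the embedding $H^1(0,T) \hookrightarrow C([0,T])$ gives a uniform bound $a := \|s\|_{C([0,T])} < \infty$. Condition $(ii)$ then yields $|d(s_1) - d(s_2)| \leq L_a |s_1 - s_2|$ for all $s_1, s_2 \in [-a,a]$, so in particular $|d(s(t))| \leq |d(0)| + L_a a$ for a.e.\ $t \in (0,T)$. Thus \eqref{eq2} reduces to the linear Cauchy problem $y' + Ay - q(t)y = 0$, $y(0) = y_0$, with $q \in L^\infty(0,T)$, $y_0 \in H$, and $A$ symmetric, bounded, and coercive on $V$ (by $(i)$, $(Ay,y) \geq k_0 \|\nabla y\|^2$).

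Existence I would obtain by a Faedo-Galerkin scheme: take a basis $\{e_j\}$ of $V$ orthonormal in $H$, seek $y_N(t) = \sum_{j=1}^N c_{jN}(t) e_j$ solving the projected ODE system with $y_N(0) = P_N y_0$. The local Lipschitz continuity of the right-hand side in $c$ (with $q$ merely $L^\infty$ in $t$) provides Carathéodory solutions. Testing with $y_N$ and using $(Ay_N, y_N) \geq k_0 \|\nabla y_N\|^2$ together with $|q(t)| \leq C_d$ yields
\begin{equation*}
\frac{1}{2}\frac{d}{dt}\|y_N\|^2 + k_0\|\nabla y_N\|^2 \leq C_d \|y_N\|^2,
\end{equation*}
from which Gronwall gives uniform bounds on $y_N$ in $L^\infty(0,T;H) \cap L^2(0,T;V)$, and comparison in the equation yields a bound on $y_N'$ in $L^2(0,T;V')$. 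Passing to a weakly/weakly-$*$ convergent subsequence and using that the map $y \mapsto q(\cdot)y$ is linear and continuous on $L^2(0,T;H)$, one identifies the limit as a solution $y \in Y$; the initial condition is recovered from the continuous embedding $Y \hookrightarrow C([0,T];H)$.

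Uniqueness follows from the same energy identity applied to the difference $w = y_1 - y_2$ of two solutions: $w$ satisfies $w' + Aw = q(t)w$ with $w(0) = 0$, so $\tfrac{1}{2}\frac{d}{dt}\|w\|^2 + k_0\|\nabla w\|^2 \leq C_d \|w\|^2$, and Gronwall forces $w \equiv 0$. The main technical point is the opening observation that $d(s(\cdot))$ is bounded; once that is in hand, everything proceeds by textbook linear parabolic arguments, and no fixed-point iteration in $y$ is needed since $s$ has already been fixed as $B(u)$.
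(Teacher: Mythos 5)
Your argument is correct and follows essentially the same route as the paper: both proofs first use the boundedness of $s=B(u)$ (you via the embedding $H^1(0,T)\hookrightarrow C([0,T])$, the paper via the pointwise estimate of Lemma~1) together with the local Lipschitz condition in $(ii)$ to bound the coefficient $d(s(\cdot))$, and then treat \eqref{eq2} as a linear parabolic problem with a coercive-up-to-a-shift operator. The only difference is that the paper simply cites \cite[Ch.3, Th.1.2]{Lions} at that point, whereas you unpack the standard Faedo--Galerkin proof of that theorem; the content is the same.
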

\begin{proof}
Let $\chi=d(s)$. By Lemma 1, $|s|\leq a$, where $a = \sqrt{T}\|u\|_U$, and therefore $|\chi|\leq |d(0)|+|d(s )-d(0)|\leq |d(0)|+ L_a a = a_1$. Problem \eqref{eq2} can be written as 
$$
y'+A_1y=0,\;\;\; y(0)=y_0,
$$
where $A_1y=Ay-\chi y$. Note that 
$$(A_1z,z)+a_1\|z\|^2\geq (k\nabla z, \nabla z)\geq k_0\|z\|^2_V. $$ 
Therefore \cite[Ch.3, Th.1.2]{Lions} there is a unique solution $y\in Y$ of problem \eqref{eq2}.
\end{proof}

\begin{theorem}
Let conditions (i),\,(ii) hold and
\begin{equation}\label{S}
\frac{1-e^{-M_0T}}{1-e^{-M_0t_0}}s_-\leq s_+.
\end{equation}
Then there is a solution of problem (P).
\end{theorem}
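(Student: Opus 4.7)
The strategy is the direct method of the calculus of variations: verify that the admissible set is nonempty thanks to condition (\ref{S}), extract a weakly convergent minimizing sequence using a priori bounds coming from Lemmas~1 and~2, pass to the limit in the state equations and in the pointwise constraints, and conclude by weak lower semicontinuity of the convex functional $J$.

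\emph{Nonemptiness of the admissible set.} I would test a constant control $u(t)\equiv u_0\geq 0$, for which (\ref{eq1}) integrates to $s(t)=\frac{u_0}{M_0}(1-e^{-M_0 t})$, a nondecreasing function. The upper constraint $s(t)\leq s_+$ on $(0,T)$ reduces to $u_0\leq \frac{M_0 s_+}{1-e^{-M_0 T}}$ (attained at $t=T$), and the lower constraint $s(t)\geq s_-$ on $(t_0,T)$ reduces to $u_0\geq \frac{M_0 s_-}{1-e^{-M_0 t_0}}$ (tightest at $t=t_0$). These two bounds are jointly satisfiable exactly when $\frac{1-e^{-M_0 T}}{1-e^{-M_0 t_0}}\, s_-\leq s_+$, which is the assumption (\ref{S}); with $y = y(u_0)$ from Lemma~2, this produces an admissible pair.

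\emph{Compactness and passage to the limit.} Let $(y_n,u_n)$ be a minimizing sequence with $s_n=B(u_n)$. Boundedness of $J(y_n,u_n)$ forces $\|u_n\|_U\leq C$, and then Lemma~1 yields $\|s_n\|_{H^1(0,T)}\leq C$ with the uniform bound $|s_n(t)|\leq \sqrt{T}\,C=:a$, so $|d(s_n)|$ is bounded by a constant $a_1$ as in the proof of Lemma~2. Standard energy estimates for (\ref{eq2}) (testing with $y_n$ and applying Gronwall) then bound $y_n$ in $Y$. Extract subsequences so that $u_n\rightharpoonup u^*$ in $U$, $s_n\rightharpoonup s^*$ in $H^1(0,T)$ with $s_n\to s^*$ uniformly on $[0,T]$ by the compact embedding $H^1(0,T)\hookrightarrow C([0,T])$, and $y_n\rightharpoonup y^*$ in $L^2(0,T;V)$ with $y_n'\rightharpoonup (y^*)'$ in $L^2(0,T;V')$; the Aubin--Lions lemma upgrades the latter to $y_n\to y^*$ strongly in $L^2(Q)$. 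Linearity of (\ref{eq1}) immediately gives $s^*=B(u^*)$; for the nonlinear term $d(s_n)y_n$, the local Lipschitz estimate in $(ii)$ combined with the uniform convergence $s_n\to s^*$ shows $d(s_n)\to d(s^*)$ uniformly, so $d(s_n)y_n\to d(s^*)y^*$ in $L^2(Q)$ and (\ref{eq2}) is preserved in the limit. The pointwise constraints $s_n(t)\leq s_+$ and $s_n(t)\geq s_-$ on $(t_0,T)$ survive uniform convergence, so $(y^*,u^*)$ is admissible. Since $J$ is the sum of squared Hilbert norms, it is convex and continuous, hence weakly lower semicontinuous, giving $J(y^*,u^*)\leq \liminf J(y_n,u_n)=\inf_P J$, so $(y^*,u^*)$ is a minimizer.

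\emph{Main obstacle.} The one genuinely non-routine step is passing to the limit in the nonlinear coupling $d(s)y$ between the two equations; this is handled by exploiting the fact that Lemma~1 provides $H^1$-regularity (hence a uniform bound in $C([0,T])$) for the $s$-component, which upgrades the weak convergence of $s_n$ to uniform convergence and lets the local Lipschitz condition in $(ii)$ do the rest.
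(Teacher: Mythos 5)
Your proposal is correct and follows essentially the same route as the paper: exhibit a constant control made admissible by condition (\ref{S}), bound a minimizing sequence via Lemma~1 and a Gronwall energy estimate, extract weakly convergent subsequences, pass to the limit in the coupled system using the Lipschitz property of $d$ together with strong convergence of $s_n$ and $y_n$, and finish by weak lower semicontinuity of $J$. The only (harmless) variations are that you characterize the full range of admissible constant controls where the paper just picks one, and you upgrade the convergence of $s_n$ to uniform convergence via $H^1(0,T)\hookrightarrow C([0,T])$ where the paper uses strong convergence in $L^2(0,T)$.
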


\begin{proof}
Let us show that the set of controls ensuring the fulfillment of the inequalities $s(t)\leq s_+,\; t\in [0,T]$, $s(t)\geq s_-,\; t\in [t_0,T]$, is non-empty. Indeed, if we take $u:=M_0(1-e^{-M_0t_0})^{-1}s_-$ as a control, then the solution to problem \eqref{eq1} has the form $s(t)=u(1 -e^{-M_0t})/M_0$ and then $s(t)\geq s_-,\; t\in [t_0,T]$. Condition \eqref{S} guarantees that $s(t)\leq s_+,\; t\in [0,T]$.

Let $J(y_m,u_m)\to j=\inf J$,
\begin{equation}
\label{E}
y_m'+Ay_m=d(s_m)y_m, \;\;\; s_m=B(u_m),\;\;\; y_m(0)=y_0;
\end{equation}
$$
s_m(t)\leq s_+,\;\;\; t\in [0,T],\;\;\; s_m(t)\geq s_-,\;\;\; t\in [t_0,T].
$$

The structure of $J$ implies the estimate $\|u_m\|_U\leq C$ and therefore, by Lemma 1,
$$
|s_m(t)| \leq C,\;\;\;   \|s_m\|_{H^1(0,T)}\leq C,\;\;\; |d(s_m(t))| \leq C.
$$
Here and below, by $C>0$, we denote constants independent of $m$. Multiplying in the sense of the inner product the equation for $y_m$ in \eqref{E} by $y_m$ and taking into account the estimates of $s_m$, we derive the inequality
$$
\frac{1}{2}\frac{d}{dt}\|y_m\|^2+k_0\|\nabla y_m\|^2\leq C\|y_m\|^2.
$$
Integrating this inequality over $t$ and applying Gronwall's lemma, we obtain the estimate $\|y_m\|_Y\leq C.$ Based on the estimates obtained, we conclude, passing to subsequences if necessary, that there exist functions $u\in U,\; s\in H^1(0,T),\; y\in Y$ such that
\begin{equation*}
u_m\to u\, \text{ weakly in }U,\;\;\; s_m\to s\, \text{ weakly in }H^1(0,T), \text{ strongly in }U,
\end{equation*}
\begin{equation*}
y_m\to y\, \text{ weakly in }L^2(0,T;V), \;\; \text{ strongly in }L^2(0,T;H).
\end{equation*}
Moreover, the Lipschitz property of $d$ guarantees that
$d(s_m)\to d(s)$ in $U=L^2(0,T)$. The obtained results on convergences allow us to pass to the limit in \eqref{E}. As a result,
$$
y'+Ay=d(s)y,\;\;\; s=B(u),\;\;\; y(0)=y_0;
$$
$$
s(t)\leq s_+,\;\; t\in [0,T];\;\;\; s(t)\geq s_-,\;\; t\in [t_0,T].
$$
Since the objective functional is weakly lower semicontinuous, then
$
j\leq J(y,u)\leq \liminf J(y_m,u_m)=j
$
and therefore the pair $\{y,u\}$ is a solution of problem ($P$).
\end{proof}

\section{Problem with penalty}

Let us define the constraint operator
\begin{equation*}
F: W\times H^1(0,T)\times U\to L^2(0,T;V')\times U\times H\times \mathbb R
\end{equation*}
such that
\begin{equation*}
F(y,s,u) =  \{y'+Ay-d(s)y, s'+ M_0 s - u, y(0)-y_0, s(0)\}.
\end{equation*}
\vskip 0.2cm

Further, we consider the following extremal problem with a penalty.\vskip 0.2cm

\textbf{Problem ($P_\varepsilon$})
\begin{multline*}
J_\varepsilon(y,s,u) =  J(y,u)
+\frac{1}{\varepsilon} \int\limits_0^T f_1(s(t)) dt
+\frac{1}{\varepsilon} \int\limits_{t_0}^T f_2(s(t)) dt
 \rightarrow \inf,\\ F(y,s,u)=0,\;\; u\in U.
\end{multline*}
Here, $\varepsilon>0$,
\[
f_1(s) =
\begin{cases}
0, & \text{if $s\leq s_+$,} \\
 (s - s_+)^2, & \text{if $s > s_+$,}
\end{cases}
\]
\[
f_2(s) =
\begin{cases}
0, & \text{if $s\geq s_-$,} \\
 (s - s_-)^2, & \text{if $s < s_-$.}
\end{cases}
\]

Estimates for the solution of the controllable system make it possible to prove the solvability of the problem with a penalty ($P_\varepsilon$) similar to the proof of Theorem 1.

\begin{theorem}
Let conditions (i),\,(ii) hold.
Then a solution of the problem ($P_\varepsilon$) exists.
\end{theorem}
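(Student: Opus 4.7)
The strategy is to mirror the compactness-plus-lower-semicontinuity argument used in the proof of Theorem~1, observing that the pointwise constraints on $s$ have been absorbed into the penalty, which simplifies the existence step. The feasible set of $(P_\varepsilon)$ is trivially nonempty: taking $u\equiv 0$ gives $s\equiv 0$ and, by Lemma~2, a unique $y\in Y$, on which $J_\varepsilon$ is finite. Hence $j_\varepsilon := \inf J_\varepsilon \in [0,\infty)$, and I select a minimizing sequence $\{(y_m,s_m,u_m)\}$ with $F(y_m,s_m,u_m)=0$ and $J_\varepsilon(y_m,s_m,u_m)\to j_\varepsilon$.

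Since the penalty terms are non-negative, boundedness of $J_\varepsilon(y_m,s_m,u_m)$ yields $\|u_m\|_U \leq C$. Lemma~1 then gives $|s_m(t)| \leq C$ and $\|s_m\|_{H^1(0,T)}\leq C$, and (ii) yields $|d(s_m(t))|\leq C$. Testing the $y_m$-equation with $y_m$ and applying Gronwall's lemma, exactly as in the proof of Theorem~1, produces $\|y_m\|_Y\leq C$. Passing to subsequences, $u_m\rightharpoonup u$ in $U$, $s_m\rightharpoonup s$ in $H^1(0,T)$ with strong convergence in $U$ and in $C([0,T])$ (via the compact embedding $H^1(0,T)\hookrightarrow C([0,T])$), and $y_m\rightharpoonup y$ weakly in $L^2(0,T;V)$ and strongly in $L^2(0,T;H)$. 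The limit passage in the constraint $F(y_m,s_m,u_m)=0$ then repeats the one in Theorem~1: the Lipschitz property (ii) combined with the strong convergence of $s_m$ gives $d(s_m)\to d(s)$ in $U$, which together with the weak/strong convergences of $y_m$ handles the nonlinearity $d(s_m)y_m$.

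It remains to establish lower semicontinuity of the augmented cost. The term $J(y,u)$ is weakly lower semicontinuous as before. For the penalty, $f_1$ and $f_2$ are continuous, non-negative functions with quadratic growth, and the uniform convergence $s_m\to s$ in $C([0,T])$ implies
\[
\int_0^T f_1(s_m(t))\,dt \to \int_0^T f_1(s(t))\,dt,\qquad \int_{t_0}^T f_2(s_m(t))\,dt \to \int_{t_0}^T f_2(s(t))\,dt,
\]
so $J_\varepsilon(y,s,u)\leq \liminf_m J_\varepsilon(y_m,s_m,u_m) = j_\varepsilon$, and $(y,s,u)$ is a minimizer. No genuinely new difficulty arises compared with Theorem~1; the only delicate point is the limit in the nonlinearity $d(s_m)y_m$, and this is already controlled by the strong convergence of $s_m$ in $L^2(0,T)$ together with the weak convergence of $y_m$ in $L^2(0,T;H)$.
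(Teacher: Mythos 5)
Your argument is correct and is exactly the route the paper intends: the paper gives no written proof of this theorem, stating only that solvability of $(P_\varepsilon)$ follows ``similar to the proof of Theorem 1,'' and your minimizing-sequence, a priori estimate, compactness, and limit-passage steps reproduce that argument faithfully, with the penalty terms handled by the uniform convergence of $s_m$ (note that strong convergence in $U$ plus the uniform bound on $s_m$ and local Lipschitz continuity of $f_1,f_2$ would suffice just as well). You also correctly observe that condition \eqref{S} is not needed here because feasibility of $(P_\varepsilon)$ requires only solvability of the state system, consistent with the theorem's hypotheses.
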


Consider the approximation properties of solutions of the problem with a penalty.

\begin{theorem}
Let conditions (i),\,(ii) hold.
If $\{y_\varepsilon,s_\varepsilon,u_\varepsilon\}$ is a solution of the problem  ($P_\varepsilon$)  for $\varepsilon>0$, then there is a sequence $\varepsilon\to +0$ such that
\begin{multline*}
u_\varepsilon \rightarrow \widehat u\; \text{ weakly in }U,\;\;\; y_\varepsilon\rightarrow \widehat y\;
\text{ strongly in }L^{2}(0,T;H),\\ s_\varepsilon \rightarrow \widehat s\; \text{ strongly in }U,
\end{multline*}
where $\{\widehat y,\,\widehat u\}$ is a solution of Problem ($P$), $\widehat s = B(\widehat u)$.
\end{theorem}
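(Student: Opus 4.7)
The plan is to run a standard penalization argument: first bound $J_\varepsilon$ at the optimum by evaluating it on a fixed feasible triple of problem $(P)$, then extract convergent subsequences via compactness, pass to the limit in the state equations and the constraints (the penalty terms force feasibility), and finally invoke weak lower semicontinuity to obtain optimality.

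First, I would fix any feasible pair $(\widetilde y,\widetilde u)$ for problem $(P)$, whose existence is guaranteed by Theorem 1, and let $\widetilde s = B(\widetilde u)$. Because $\widetilde s$ satisfies the pointwise constraints, $f_1(\widetilde s) \equiv 0$ and $f_2(\widetilde s) \equiv 0$, so $J_\varepsilon(\widetilde y,\widetilde s,\widetilde u) = J(\widetilde y,\widetilde u)$. By optimality of $(y_\varepsilon,s_\varepsilon,u_\varepsilon)$,
\[
J(y_\varepsilon,u_\varepsilon) + \frac{1}{\varepsilon}\!\int_0^T\! f_1(s_\varepsilon)\,dt + \frac{1}{\varepsilon}\!\int_{t_0}^T\! f_2(s_\varepsilon)\,dt \;\leq\; J(\widetilde y,\widetilde u).
\]
This immediately gives $\|u_\varepsilon\|_U \leq C$, and then Lemma 1 yields $\|s_\varepsilon\|_{H^1(0,T)} \leq C$ and $|s_\varepsilon(t)|\leq C$, so $|d(s_\varepsilon(t))|\leq C$. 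Repeating the energy estimate from the proof of Theorem 1 (multiply the $y_\varepsilon$-equation by $y_\varepsilon$, apply Gronwall) gives $\|y_\varepsilon\|_Y \leq C$. Moreover, the two penalty terms are $O(\varepsilon)$.

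Next, I would pass to the limit along a subsequence $\varepsilon\to 0$. Standard weak compactness gives $u_\varepsilon \rightharpoonup \widehat u$ in $U$, $s_\varepsilon \rightharpoonup \widehat s$ in $H^1(0,T)$, and $y_\varepsilon\rightharpoonup \widehat y$ in $L^2(0,T;V)$ with $y_\varepsilon'\rightharpoonup \widehat y'$ in $L^2(0,T;V')$. Aubin--Lions compactness then yields strong convergence $y_\varepsilon\to \widehat y$ in $L^2(0,T;H)$, and the compact embedding $H^1(0,T)\hookrightarrow C([0,T])$ gives $s_\varepsilon\to\widehat s$ uniformly, hence also strongly in $U$. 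The Lipschitz bound on $d$ then yields $d(s_\varepsilon)\to d(\widehat s)$ strongly in $L^\infty(0,T)$, so $d(s_\varepsilon)y_\varepsilon \to d(\widehat s)\widehat y$ strongly in $L^1(0,T;H)$ and in the sense of distributions. Passing to the limit in the weak forms of \eqref{eq1} and \eqref{eq2} gives $\widehat y'+A\widehat y = d(\widehat s)\widehat y$, $\widehat y(0)=y_0$, and $\widehat s = B(\widehat u)$. This nonlinear passage to the limit is the only non-routine step, and it is handled precisely as in the proof of Theorem 1.

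Feasibility of $\{\widehat y,\widehat u\}$ for problem $(P)$ follows from the penalty bound: since $\int_0^T f_1(s_\varepsilon)\,dt \to 0$, $\int_{t_0}^T f_2(s_\varepsilon)\,dt\to 0$, and $s_\varepsilon\to \widehat s$ uniformly, continuity of $f_1,f_2$ gives $\int_0^T f_1(\widehat s)\,dt = 0$ and $\int_{t_0}^T f_2(\widehat s)\,dt = 0$, i.e.\ $\widehat s(t)\leq s_+$ on $[0,T]$ and $\widehat s(t)\geq s_-$ on $[t_0,T]$.

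Finally, for optimality, let $(y,u)$ be any feasible pair for $(P)$ and let $s=B(u)$; then $J_\varepsilon(y,s,u)=J(y,u)$, so $J_\varepsilon(y_\varepsilon,s_\varepsilon,u_\varepsilon)\leq J(y,u)$. Weak lower semicontinuity of the norms yields
\[
J(\widehat y,\widehat u)\;\leq\; \liminf_{\varepsilon\to 0} J(y_\varepsilon,u_\varepsilon) \;\leq\; \liminf_{\varepsilon\to 0} J_\varepsilon(y_\varepsilon,s_\varepsilon,u_\varepsilon)\;\leq\; J(y,u),
\]
so $\{\widehat y,\widehat u\}$ solves $(P)$. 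The only real obstacle is the nonlinear limit in $d(s_\varepsilon)y_\varepsilon$, which is resolved by combining uniform convergence of $s_\varepsilon$, Lipschitz continuity of $d$, and strong $L^2(0,T;H)$ convergence of $y_\varepsilon$ from Aubin--Lions.
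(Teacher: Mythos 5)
Your proposal is correct and follows essentially the same route as the paper's proof: bound $J_\varepsilon$ at the optimum using a feasible (in the paper, optimal) pair of $(P)$, derive uniform estimates, extract weakly/strongly convergent subsequences, pass to the limit in the state equations, recover the constraints from the vanishing penalty terms, and conclude by weak lower semicontinuity. The only cosmetic difference is that you compare against an arbitrary feasible pair at the end, while the paper compares directly against the minimizer whose value is $j=\inf J$; the two are equivalent.
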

\begin{proof}
Consider the pair $\{y,u\}$, a solution to problem ($P$), the existence of which is guaranteed by Theorem 1. Since $s=B(u)$ satisfies the inequalities $s(t)\leq s_+,\; t\in [0,T]$; $s(t)\geq s_-,\; t\in [t_0,T]$ that means
$f_1(s(t))=0,\; t\in [0,T]$, $f_2(s(t))=0,\; t\in [t_0,T]$, then
\begin{equation*}
J(y_\varepsilon,u_\varepsilon)+\frac{1}{\varepsilon} \int\limits_0^T f_1(s_\varepsilon(t)) dt
+\frac{1}{\varepsilon} \int\limits_{t_0}^T f_2(s_\varepsilon(t)) dt \\
\leq J(y,u).    
\end{equation*}
Here, $s_\varepsilon=B(u_\varepsilon)$. Therefore 
\begin{equation}
\label{P}
J(y_\varepsilon,u_\varepsilon)\leq J(y,u),\;\;\; \int\limits_0^T f_1(s_\varepsilon(t)) dt
+\int\limits_{t_0}^T f_2(s_\varepsilon(t)) dt
\leq \varepsilon J(y,u).
\end{equation}
From the first inequality \eqref{P} it follows that $\|u_\varepsilon\|_U\leq C$ and similarly to the proof of Theorem 1 we derive the estimates
\[
|s_\varepsilon(t)| \leq C,\;\;\; \|s_\varepsilon\|_{H^1(0,T)}\leq C,\;\;\;
|d(s_\varepsilon(t))| \leq C, \;\;\; \|y_\varepsilon\|_Y\leq C.
\]
Here, by $C>0$, we denote constants independent of $\varepsilon$. Based on the estimates obtained, we conclude, passing to subsequences if necessary, that there are functions $\widehat u\in U,\; \widehat s\in H^1(0,T),\; \widehat y\in Y$ such that
\begin{equation*}
u_\varepsilon\to \widehat u\;\, \text{ weakly in }U,\;\; s_\varepsilon\to \widehat s\;\, \text{ weakly in }H^1(0,T), \text{ strongly in }U,
\end{equation*}
\begin{equation*}
y_\varepsilon\to \widehat y\;\, \text{ weakly in }L^2(0,T;V),\;\, \text{ strongly in }L^2(0,T;H).
\end{equation*}
At the same time $d(s_\varepsilon)\to d(\widehat s)$ in $U=L^2(0,T)$.
The obtained results on convergence allow us, by passing to the limit in the equality $F(y_\varepsilon,s_\varepsilon,u_\varepsilon)=0$, to arrive at the relation $F(\widehat y,\widehat s,\widehat u)=0$. Moreover,
$$
\int\limits_0^T f_1(s_\varepsilon(t)) dt\to \int\limits_0^T f_1(\widehat s(t)) dt,\;\;\;
\int\limits_{t_0}^T f_2(s_\varepsilon(t)) dt\to \int\limits_{t_0}^T f_2(\widehat s(t)) dt,
$$
and since by virtue of \eqref{P} these limits equal to $0$, then
$\widehat s(t)\leq s_+,\; t\in [0,T]$;\; $\widehat s(t)\geq s_-,\; t\in [t_0,T].$
Therefore, the pair $\{\widehat y,\widehat u\}$ is admissible for problem ($P$). Notice that 
$$
j=\inf J\leq J(\widehat y,\widehat u)\leq \liminf J(y_\varepsilon,u_\varepsilon)\leq J(y,u)=j,
$$
and therefore $\{\widehat y,\widehat u\}$ is a solution of problem ($P$).
\end{proof}

To obtain the optimality system, we will use the Lagrange principle for smooth-convex extremal problems~\cite{Furs}. Let us establish the non-degeneracy of the optimality conditions by showing that the image of the derivative of the constraint operator with respect to $s$ coincides with the space $W = L^2(0,T;V')\times U\times H\times \mathbb R$.
\begin{lemma}
Let conditions (i),\,(ii) hold. For any $\widehat y \in Y$, $\widehat s\in H^1(0,T)$, $\widehat u \in U$ the following equality is true:
$$
\texttt{Im}\,F'_\nu (\widehat y,\widehat s,\widehat u) = W.
$$
Here, $\nu=\{y,s\}$.
\end{lemma}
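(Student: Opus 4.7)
The plan is to compute the partial derivative $F'_\nu(\widehat y,\widehat s,\widehat u)$ explicitly and then show its surjectivity onto $W$ by solving a decoupled ODE/PDE system. Differentiating $F$ in $\nu=\{y,s\}$ (and using $d\in C^1$, as implicitly required by the smooth--convex framework of \cite{Furs}), one obtains for any test pair $(y,s)\in Y\times H^1(0,T)$ the formula
\begin{equation*}
F'_\nu(\widehat y,\widehat s,\widehat u)(y,s)=\{y'+Ay-d(\widehat s)y-d'(\widehat s)\widehat y\,s,\;s'+M_0 s,\;y(0),\;s(0)\}.
\end{equation*}
To prove $\texttt{Im}\,F'_\nu=W$, I would fix an arbitrary element $\{g_1,g_2,y_1,s_1\}\in L^2(0,T;V')\times U\times H\times\mathbb R$ and construct $(y,s)\in Y\times H^1(0,T)$ whose image under $F'_\nu$ coincides with it.

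The first step is to solve the scalar Cauchy problem $s'+M_0 s=g_2$, $s(0)=s_1$. Variation of constants gives the unique solution $s(t)=s_1 e^{-M_0 t}+\int_0^t e^{-M_0(t-\tau)}g_2(\tau)\,d\tau\in H^1(0,T)$, and the embedding $H^1(0,T)\subset C[0,T]$ yields $\|s\|_{L^\infty(0,T)}\leq C(|s_1|+\|g_2\|_U)$. Notice that this equation is decoupled from $y$, which is what makes the surjectivity tractable.

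With $s$ in hand, I would then solve the linear parabolic Cauchy problem
\begin{equation*}
y'+Ay-d(\widehat s)y=g_1+d'(\widehat s)\widehat y\,s,\qquad y(0)=y_1.
\end{equation*}
Because $\widehat s\in H^1(0,T)\subset C[0,T]$ takes values in a bounded set, condition (ii) gives $d(\widehat s),\,d'(\widehat s)\in L^\infty(0,T)$. The forcing term $d'(\widehat s)\widehat y\,s$ belongs to $L^2(0,T;H)\subset L^2(0,T;V')$ since $\widehat y\in L^2(0,T;H)$ and $s\in L^\infty(0,T)$, so the right-hand side lies in $L^2(0,T;V')$. Exactly as in the proof of Lemma 2, the shifted operator $A_1 y:=Ay-d(\widehat s)y+\kappa y$ is coercive on $V$ uniformly in $t$ for $\kappa$ large enough, and Lions' theorem \cite[Ch.3, Th.1.2]{Lions} yields a unique solution $y\in Y$. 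The pair $(y,s)$ is then the required preimage.

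The main obstacle, I expect, is not the two existence arguments (which are routine applications of Lemma 2 and elementary ODE theory), but rather the justification of the formula for $F'_\nu$: one must argue that the Nemytskii-type map $(y,s)\mapsto d(s)y$ is Fréchet differentiable from $Y\times H^1(0,T)$ into $L^2(0,T;V')$ with the stated derivative. This uses the uniform boundedness of $\widehat s$ on $[0,T]$ to invoke the local Lipschitz bound in (ii) for the remainder, together with an assumption of $C^1$ regularity of $d$; once this smoothness is granted, the surjectivity construction above closes the proof.
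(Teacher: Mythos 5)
Your proof follows essentially the same route as the paper: both first solve the decoupled scalar Cauchy problem for $s$ and then invoke Lions' theorem \cite[Ch.3, Th.1.2]{Lions} for the linear parabolic problem in $y$ with right-hand side $g_1+d'(\widehat s)\widehat y\,s\in L^2(0,T;V')$. Your extra discussion of the Fr\'echet differentiability of the Nemytskii map $(y,s)\mapsto d(s)y$ (requiring $d\in C^1$) makes explicit a point the paper leaves implicit, but does not alter the argument.
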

\begin{proof}
Consider an arbitrary element $\eta= \{\zeta,q,z_0,h_0\}\in W$
and check that there is a solution $z\in Y$, $h\in H^1(0,T)$ of the system
\[
z'+Az-d(\widehat s)z=\zeta+d'(\widehat s)h\widehat y,\;\;z(0)=z_0,
\] 
\[
h' + M_0 h = q,\;\; h(0)=h_0.
\]
The existence of a solution $h\in H^1(0,T)$ of the Cauchy problem for the linear ordinary differential equation is obvious.
The solvability of the parabolic problem for $z$ with the right-hand side $\zeta + d'(\widehat s)h\widehat y\in L^2(0,T;V')$ follows from \cite[Ch.3, Th. 1.2]{Lions}.
\end{proof}

\begin{theorem}
Let conditions (i),\,(ii) hold.
If $\{y_\varepsilon,s_\varepsilon,u_\varepsilon\}$ is a solution of  problem  ($P_\varepsilon$)  for $\varepsilon>0$, then there is a unique solution $\{p_1,p_2\}\in W\times H^1(0,T)$ of the dual system
\begin{equation}\label{D1}
-p_1' + Ap_1 - d(s_\varepsilon)p_1 = -y_\varepsilon,\;\;\;  p_1(T)=0;
\end{equation}
\begin{multline}\label{D2}
-p_2' + M_0p_2 + \frac{1}{\varepsilon}f_1'(s_\varepsilon) + \frac{1}{\varepsilon}\chi(s_\varepsilon,t)\\ = d'(s_\varepsilon)(y_\varepsilon,p_1),\;\;\; t\in (0,T),\;\;\; p_2(T)=0,
\end{multline}
where $\chi(s,t)=f_2'(s)$ if $t\in [t_0,T]$ and $\chi(s,t)=0$ if $t\in [0,t_0)$.
Moreover, the parameter $u_\varepsilon$ is defined by the following equality $u_\varepsilon=\lambda^{-1}p_2.$ 
\end{theorem}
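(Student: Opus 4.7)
The plan is to apply the Lagrange principle for smooth--convex extremal problems from \cite{Furs} to the problem $(P_\varepsilon)$. First I would check the hypotheses of this principle. The functional $J_\varepsilon$ decomposes as a quadratic (hence smooth and convex) part in $(y,u)$ plus the penalty terms $\tfrac{1}{\varepsilon}\int f_1(s_\varepsilon)\,dt$ and $\tfrac{1}{\varepsilon}\int_{t_0}^T f_2(s_\varepsilon)\,dt$; the functions $f_1,f_2$ belong to $C^{1,1}(\mathbb R)$ with derivatives $f_1'(s)=2(s-s_+)_+$, $f_2'(s)=-2(s_--s)_+$, so $J_\varepsilon$ is Fr\'echet differentiable. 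The constraint map $F$ is $C^1$ on $Y\times H^1(0,T)\times U$, and the preceding Lemma establishes that its partial derivative with respect to $\nu=\{y,s\}$ at $(y_\varepsilon,s_\varepsilon,u_\varepsilon)$ is surjective onto $W$. This surjectivity delivers the non-degenerate form of the Lagrange rule, in which the multiplier attached to $J_\varepsilon$ may be normalized to $1$.

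Next I would introduce the Lagrangian
\begin{equation*}
\mathcal L = J_\varepsilon(y,s,u) + \int_0^T\!\langle p_1,\,y'+Ay-d(s)y\rangle\,dt + \int_0^T\! p_2\,(s'+M_0 s-u)\,dt,
\end{equation*}
where the initial-data components of $F$ are matched by the endpoint terms arising from integration by parts, and derive the stationarity conditions. Varying in $y$, using that $A$ is symmetric and integrating by parts in time, gives \eqref{D1} together with the transversality condition $p_1(T)=0$. Varying in $s$, with the identity $\langle d'(\widehat s)\delta s\cdot \widehat y,p_1\rangle=d'(\widehat s)(y_\varepsilon,p_1)\delta s$ and again integrating by parts, yields \eqref{D2} with $p_2(T)=0$; here the contribution of the penalty terms is precisely $\tfrac{1}{\varepsilon}f_1'(s_\varepsilon)+\tfrac{1}{\varepsilon}\chi(s_\varepsilon,t)$, where the indicator $\chi$ encodes the restriction $t\in[t_0,T]$ of the second penalty. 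Finally, variation in $u\in U=L^2(0,T)$ gives $\lambda u_\varepsilon-p_2=0$, i.e. $u_\varepsilon=\lambda^{-1}p_2$.

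It remains to establish existence and uniqueness of the adjoint pair $\{p_1,p_2\}$. The coefficient $d(s_\varepsilon)$ is bounded in $L^\infty(0,T)$ by the a priori bounds on $s_\varepsilon$ used in Theorem 2, so the time-reversed problem \eqref{D1} is a linear parabolic equation with bounded coefficient and right-hand side $-y_\varepsilon\in L^2(0,T;H)\subset L^2(0,T;V')$, whose unique solution in $Y$ is obtained by \cite[Ch.3, Th.1.2]{Lions} after the change of variable $t\mapsto T-t$. With $p_1$ in hand, the right-hand side $d'(s_\varepsilon)(y_\varepsilon,p_1)-\tfrac{1}{\varepsilon}f_1'(s_\varepsilon)-\tfrac{1}{\varepsilon}\chi(s_\varepsilon,t)$ of \eqref{D2} lies in $L^2(0,T)$, so the backward linear ODE for $p_2$ has a unique solution in $H^1(0,T)$.

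The main obstacle I anticipate is the verification that the smooth--convex framework of \cite{Furs} really applies at the required regularity: one must ensure that $d$ is $C^1$ (Lipschitz from condition (ii) is sufficient only for the existence theory, whereas the dual equation involves $d'(s_\varepsilon)$) and that the penalty terms, whose second derivatives jump at $s=s_\pm$, are handled within the admissible class (the $C^{1,1}$ regularity suffices because the Lagrange principle only needs first-order differentiability). Once those regularity points are secured, the remaining work is a routine book-keeping of the integration-by-parts and an application of the standard linear existence theorems quoted above.
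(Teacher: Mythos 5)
Your proposal follows essentially the same route as the paper: invoke the Lagrange principle for smooth--convex problems from \cite{Furs}, use the surjectivity of $F'_\nu$ established in Lemma 3 to normalize the multiplier, and read off \eqref{D1}, \eqref{D2} and $u_\varepsilon=\lambda^{-1}p_2$ from the stationarity of the Lagrangian in $y$, $s$, $u$. You in fact supply more detail than the paper does (the backward-in-time solvability of the adjoint system via \cite[Ch.3, Th.1.2]{Lions}, and the observation that condition (ii) only gives Lipschitz continuity of $d$ while the dual system uses $d'$ --- a regularity point the paper itself leaves implicit), but the argument is the same one.
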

\begin{proof}
The statement of the theorem follows from \cite[Chapter 2, Theorem 1.5]{Furs}.
By Lemma 3, the Lagrange function of problem ($P$) is defined by the equality
\begin{multline*}
L(y,s,u,p_1,p_2,q_1,q_2)= J_\varepsilon(y,s,u)+\int_0^T(y'+Ay-d(s)y,p_1)dt+ \\
\int_0^T(s'+M_0s-u)p_2dt + (y(0)-y_0,q_1)+s(0)q_2,
\end{multline*}
where\, $\{p_1,p_2\}\in Y\times H^1(0,T),\;\, q_1\in H,\;\, \text{and}$ $q_2\in \mathbb R$. In accordance with Lagrange's principle, equalities
$L_y'(y_\varepsilon,s_\varepsilon,u_\varepsilon,p_1,p_2,q_1,q_2)=0$ and $L_s'(y_\varepsilon,s_\varepsilon,u_\varepsilon,p_1,p_2, q_1,q_2)=0$
give the conjugate system \eqref{D1}, \eqref{D2}, and the condition $L_u'(y_\varepsilon,s_\varepsilon,u_\varepsilon,p_1,p_2,q_1,q_2)=0$ implies the equality $u_\varepsilon=\lambda^{-1} p_2.$
\end{proof}

\section{Numerical algorithm}

\noindent 1. Choose the value of the gradient step $\delta$, the number of iterations $N$, the value of the penalty coefficient $\varepsilon$, and the initial approximation for the control $u_0 \in U$.
\vskip 0.2cm

\noindent 2.  Loop for $k = 0, 1, 2,\dots, N$. \\
For a given $u_k$, calculate the state $y_k,\,s_k$ that is the solution of the problem \eqref{eq1},\,\eqref{eq2}.
\vskip 0.2cm

\noindent 3. Calculate the value of the objective functional $J_\varepsilon(y_k, s_k, u_k)$.
\vskip 0.2cm

\noindent 4. Calculate the conjugate state $\{p_{1k},\,p_{2k}\}$ from the equations \eqref{D1},\,\eqref{D2} setting $y_\varepsilon=y_k,\; s_\varepsilon=s_k$.
\vskip 0.2cm

\noindent 5. 
Recalculate the control $u_{k+1} = u_k - \delta (\lambda u_k - p_{2k})$.
\vskip 0.2cm

The value of the parameter $\delta$ is chosen empirically in such a way that the value $\delta (\lambda u_k - p_{2k})$ is a significant correction for $u_{k+1}$. The number of iterations $N$ is chosen sufficient to satisfy the condition
$J_\varepsilon(y_k, s_k,u_k) - J_\varepsilon(y_{k+1}, s_{k+1}u_{k+1}) < \epsilon$, where $\epsilon>0$ determines the accuracy of calculations.

\section{Numerical example}

As the computational domain, we consider a square with an edge of 3\,cm. The tumor occupies the circle located in the center of the computational domain with a diameter of 1\,cm.  We set the following values of the problem parameters: $s_c = 0.2$, $s_- = 0.4$, $s_+ =0.8$, $T=28$\;(days). The value of the diffusion coefficient $k$ is  $2.5\cdot 10^{-9}$\;(cm$^2$/s) \cite{Kolobov}. The initial distribution of the control is set as follows: it equals 0.00014 (s$^{-1}$) during the first hour of each day. The value of the penalty coefficient $\varepsilon$ is set equal to 0.2. The number of iterations is 10.

The implementation of the computational algorithm was fulfilled by the finite element method package FreeFEM\,++ \cite{Hecht}. In Figure~1, the behavior of the objective functional $J_\varepsilon$ in dependence on the number of iterations is shown.

In Figure~2, the behavior of the control $u_\varepsilon$ during 28 days of observation is shown. The dots indicate the amount of medication taken every 8 hours. The red plot is their continuous approximation. In Figure~3, the behavior of the drug concentration in tissue $s_\varepsilon$ (blue plot) in comparison with the minimal and maximal constraints ($s_-$ and $s_+$, red lines) and the critical level of drug concentration ($s_c$, violet line) is shown. In Figure~4, the distributions of tumor cell density $y_\varepsilon$ in the central cross-section of the computational domain for different time moments (initial time moment, 1 week, 2 weeks, 3 weeks, and 4 weeks) are shown. As can be seen, during the whole time interval, a monotonic decrease in tumor cell density is observed. 

\section{Conclusion}

The optimal control problem for the model of tumor evolution under the influence of drug therapy has been studied. The solvability of the problem has been proved. The numerical algorithm for finding a solution has been constructed, its convergence has been established. 

Subsequent efforts of the authors will be aimed at studying a model of tumor evolution that includes a nonlinear reaction term and describes the behavior of two or more species of tumor cells.

\centerline{
\includegraphics[width=115mm]{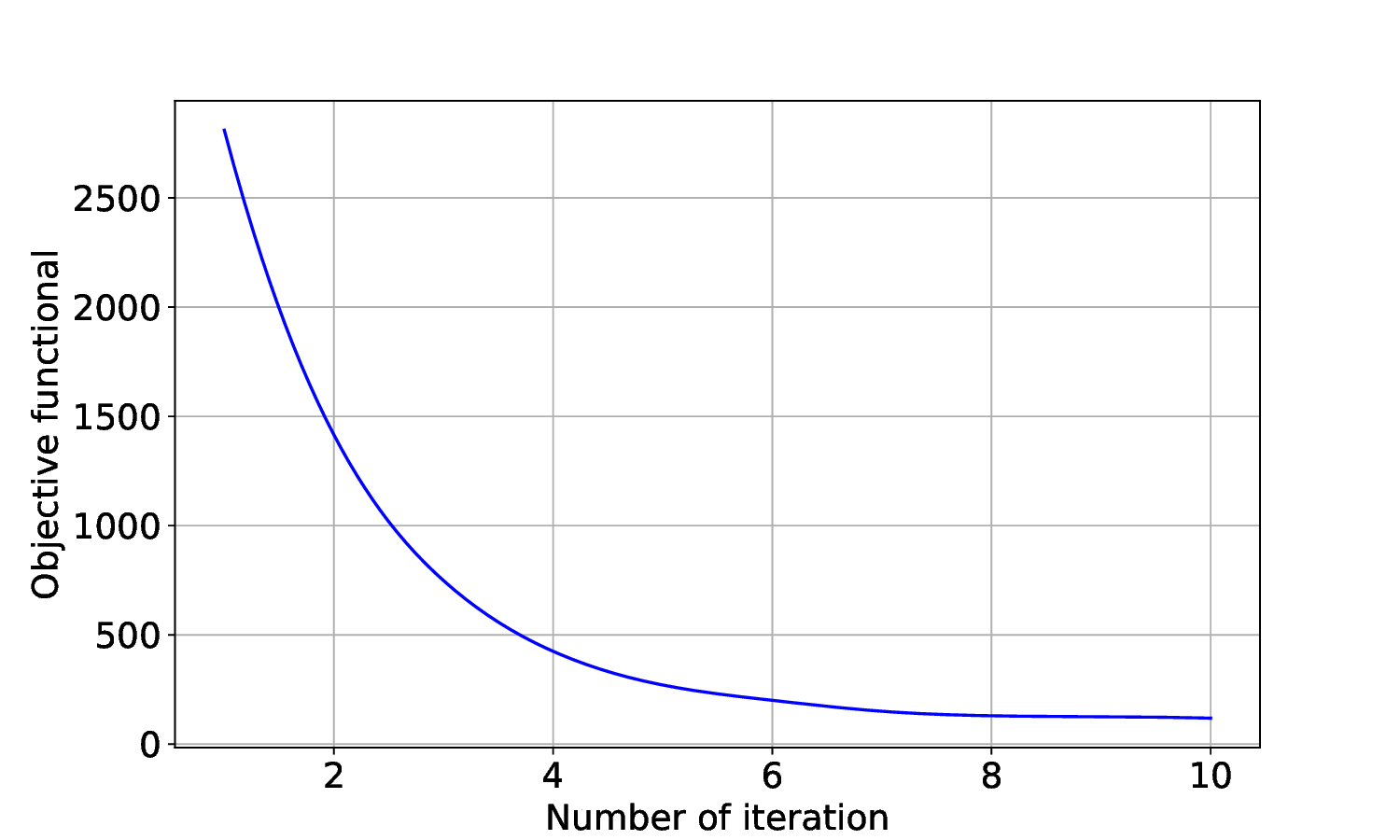}}
\medskip
\vskip -0.1cm
\centerline{\narrower \small \textbf{Figure~1:} Objective functional. }

\centerline{
\includegraphics[width=115mm]{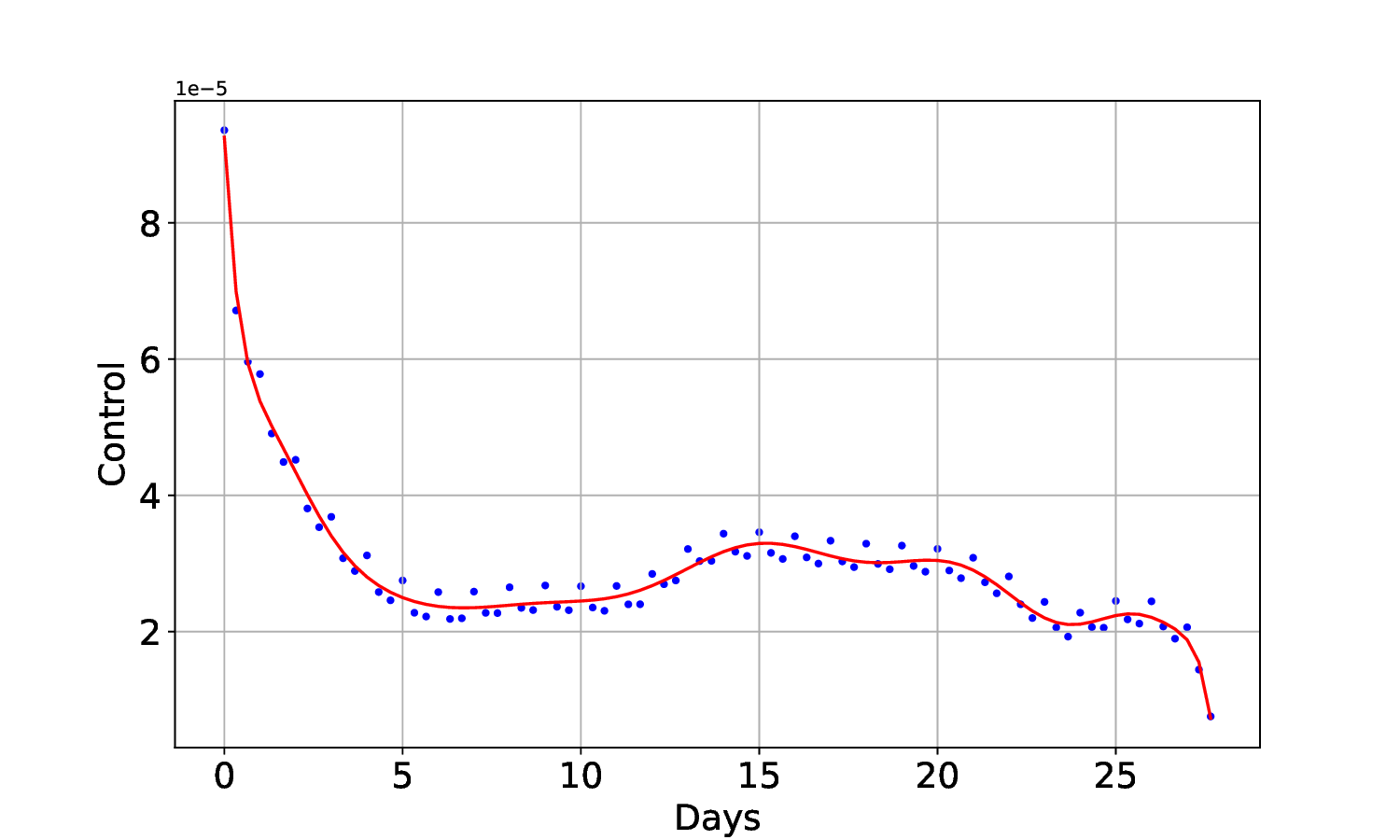}
}
\vskip -0.1cm
{\narrower\noindent \small \textbf{Figure~2:} Control (dots) and its continuous approximation (red plot). \par}

\centerline{
\includegraphics[width=115mm]{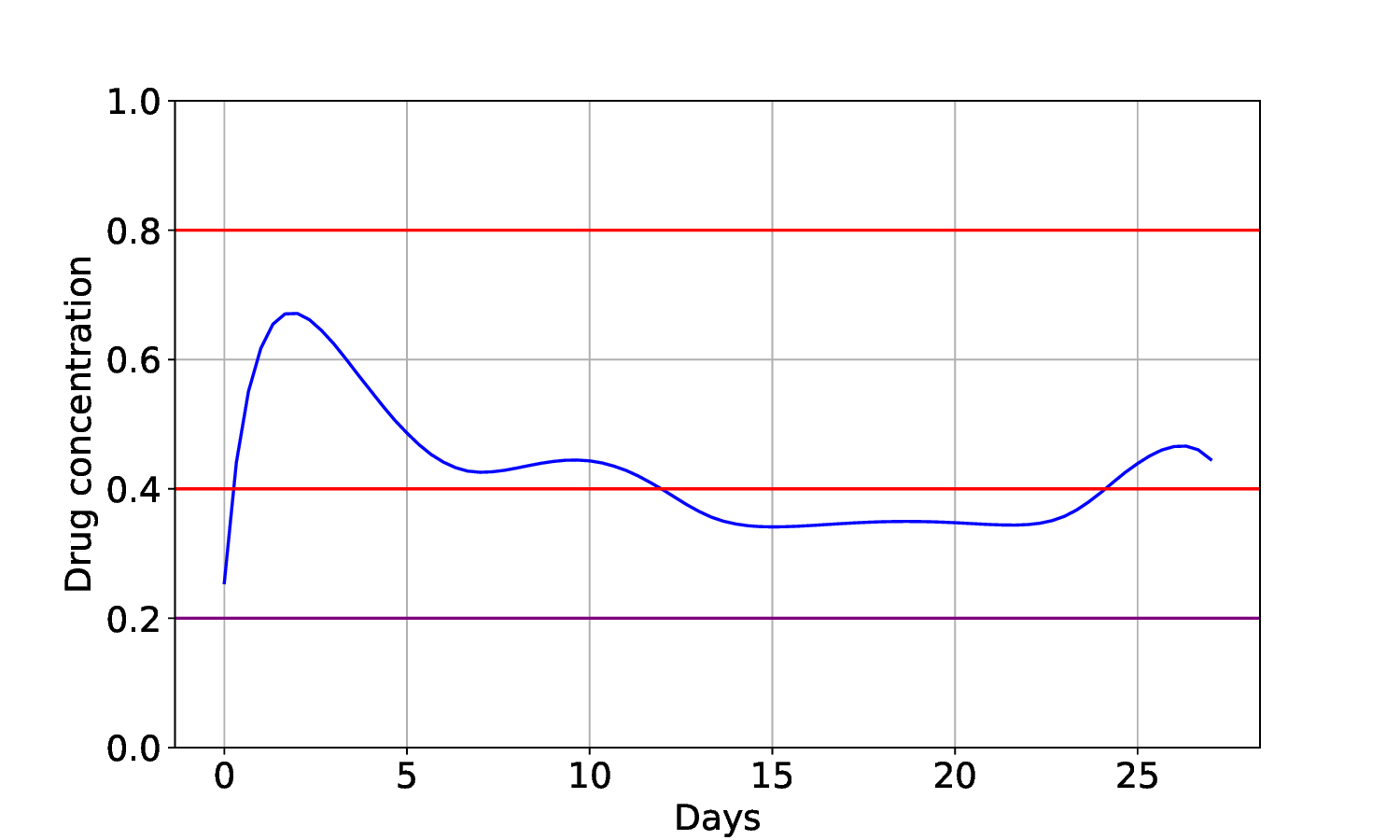}
}
\vskip 0.2cm
{\narrower\noindent\small \textbf{Figure~3:} Drug concentration (blue plot), constraints on drug concentration (red lines), and the critical value of drug concentration (violet line). \par}
\vskip 0.2cm
\centerline{
\includegraphics[width=115mm]{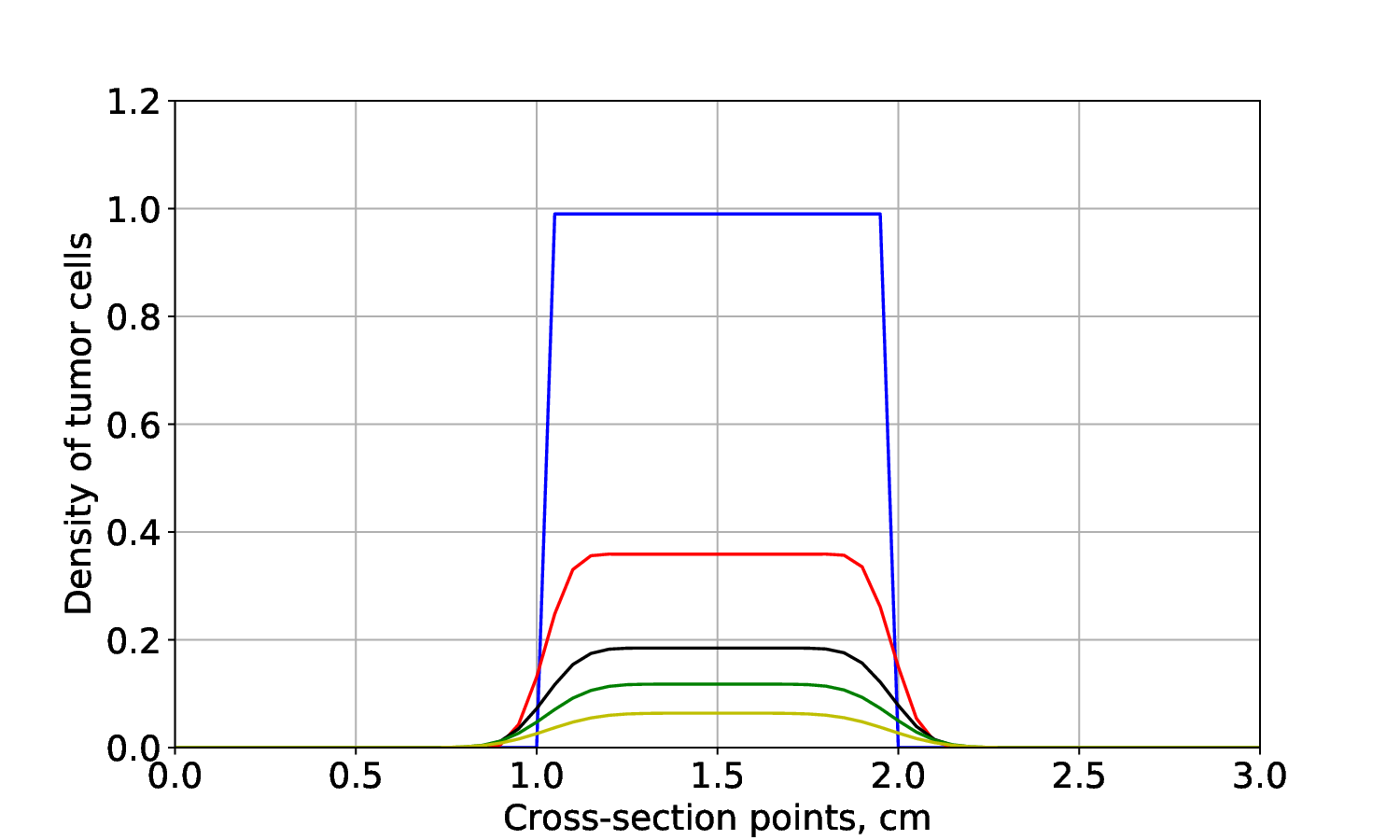}
}
\vskip 0.2cm
{\narrower\noindent\small \textbf{Figure~4:} Density of tumor cells in different time moments: initial distribution (blue), 1 week (red), 2 weeks (black),  3 weeks (green), and 4 weeks (yellow). \par}

\pagebreak









\end{document}